\newcommand{\vertiii}[1]{{\left\vert\kern-0.25ex\left\vert\kern-0.25ex\left\vert #1 
    \right\vert\kern-0.25ex\right\vert\kern-0.25ex\right\vert}}
\theoremstyle{plain}
\newtheorem{maintheorem}{Theorem}
\newtheorem{thm}{Theorem}[section]
\newtheorem{lem}[thm]{Lemma}
\newtheorem{prop}[thm]{Proposition}
\theoremstyle{definition}
\theoremstyle{remark}
\newtheorem{remark}[thm]{Remark}
\numberwithin{equation}{section}
\title[Irregular sets for piecewise monotonic maps]{
Irregular sets for piecewise monotonic maps
 }
\date{\today}
\author{Yushi Nakano}
\address[Yushi Nakano]{Department of Mathematics, Tokai University, 
Kanagawa 259-1292, JAPAN}
\email{yushi.nakano@tsc.u-tokai.ac.jp}
\author{Kenichiro Yamamoto}
\address[Kenichiro Yamamoto]{Department of General Education, Nagaoka University of Technology, Niigata 940-2188, JAPAN}
\email{k\_yamamoto@vos.nagaokaut.ac.jp}
\subjclass[2010]{37B10,37B40,37E05}
\keywords{Irregular set, historic behavior, piecewise monotonic map,
coding space, Markov Diagram}
\begin{document}

\begin{abstract}
For any transitive piecewise monotonic map
for which  the set of periodic measures is dense in the set of ergodic invariant measures  (such as
monotonic mod one transformations and piecewise monotonic maps with two monotonic pieces), 
 we show that the set of points for which the Birkhoff average of a  continuous function 
  does not exist (called the irregular set)
   is either empty or has full topological entropy.
This 
generalizes Thompson's theorem for irregular sets of $\beta$-transformations, and 
reduces a complete description 
 of irregular sets of transitive piecewise monotonic maps to 
   Hofbauer-Raith problem on the density of periodic measures.
\end{abstract}

\maketitle

\section{Introduction}\label{section:introduction}

Let $X$ be a compact metric space  and $T: X\to X$ a Borel measurable map.
Let $\mathcal C(X)$ be the set of 
 continuous functions on $X$.
The \emph{irregular set} $E(\varphi )$ of $\varphi \in \mathcal C(X)$ is given by
\[
E(\varphi ) = \left\{ x\in X \mid \lim _{n\to \infty } \frac{1}{n} \sum _{j=0}^{n-1} \varphi (T^j (x)) \; \text{does not exist} \right\} .
\]
It is also called the set of \emph{non-typical points}  
 (\cite{BS2000}) 
  or \emph{divergence points} 
   (\cite{CKS2005}),
   and (the forward orbit of) a point in $\bigcup _{\varphi \in \mathcal C (X)}E(\varphi ) $ 
    is said to have \emph{historic behavior} (\cite{Ruelle2001, Takens2008}). 

Although every irregular set is a $\mu$-zero measure set for any invariant measure $\mu$ due to Birkhoff's ergodic theorem, the set is known to be remarkably large for abundant dynamical systems.
Pesin and Pitskel \cite{PP1984}   obtained the first result for the largeness of irregular sets from thermodynamic  viewpoint.  
In their paper, they showed that    every irregular set for the full shift is 
 either empty or has full topological entropy, that is, 
\[
 E(\varphi ) =\emptyset \quad \text{or} \quad h_{\mathrm{top}}(T,E(\varphi ) ) = h_{\mathrm{top}}(T,X)
\]
(they also showed that $E(\varphi)$ has  full Hausdorff dimension  if and only if $E(\varphi ) \neq \emptyset$).
Here $h_{\mathrm{top}}(T,Z)$ is the (Bowen's Hausdorff) topological entropy for a continuous map $T$ on a (not necessarily compact) Borel set $Z$ given in \cite{Bowen1973} (see Subsection \ref{subsection:defofentropy} for precise definition; 
refer to  \cite{HNP2008} for relation between entropies for a non-compact). 
It is also known that $E(\varphi ) \neq \emptyset$ if and only if $ \int \varphi d\mu _1 \neq \int \varphi d\mu _2$ for some ergodic invariant probability measures $\mu _1, \mu _2$ (refer to \cite[Lemma 1.6]{Thompson2010}).
Pesin-Pitskel's
 thermodynamic dichotomy for irregular sets was extended to topologically mixing subshifts of finite type in \cite{BS2000} (together with the detailed study of the set of points at which Lyapunov exponent or local entropy fail to exist), 
to continuous maps with specification property in \cite{CKS2005} (see also \cite{Thompson2010}), 
and  to continuous maps with almost specification property (including all $\beta$-transformations) by Thompson \cite{Thompson2012}.
We also note that F\"arm \cite{Farm2011} showed a stronger property (called large intersection property) than the full Hausdorff dimension for irregular sets of $\beta$-transformations for a large class of $\beta$'s (this restriction on $\beta$ was later removed by in \cite{FP2013}), independently of the Thompson's work. 
See also \cite{Ruelle2001, Takens2008, KS2017, AP2019, BLV2014, FP2011,  BV2015, BV2017} and references therein for the study of irregular sets from other viewpoints.

The aim of this paper is to extend the 
 thermodynamic  dichotomy 
 to transitive piecewise monotonic maps 
  for which the set of  periodic measures is dense in the set of ergodic invariant measures. 
We emphasize that we do not (explicitly) assume any specification-like property on $T$, being in contrast to all the previous works
(see also the remark above Proposition \ref{prop:1}).
Furthermore, we will see that the transitivity seems to be intractable  (Remark \ref{rmk:1021};  see also Remark \ref{rmk:3.2} for necessity of  
the density of
 periodic measures)
 and that the density of periodic measures 
is shown to hold for abundant classes of transitive piecewise monotonic maps 
   by several authors  while no transitive piecewise monotonic map without the density of periodic measures is presently known at now 
  (see Subsection \ref{subsection:application} for detail).
Hence we hope that our result would be a nice step to a complete  description of irregular sets of piecewise monotonic maps.

\subsection{Main results}
Let $T:X\to X$ be a Borel measurable map on a metric space $X$. 
 We say that $T$ is \emph{transitive} if there exists a point $x\in X$ whose forward orbit $\{ T^n (x) \mid n\geq 0\}$ is dense in $X$.
We denote by 
 $\mathcal M_T^{\mathrm{erg}}(X)$  
  the set of ergodic $T$-invariant Borel probability measures on $X$, endowed with the weak topology. 
A probability measure $\mu$
is called a periodic measure if there is a periodic point $p$ of period $n$ such that $\mu =  \big(\sum _{j=0}^{n-1} \delta _{T^j (p)} \big)/n$, where $\delta _y$ is the Dirac measure at $y\in X$, and we let 
$\mathcal M_T^{\mathrm{per}}(X)$ be  the set of periodic measures on $X$. 
Note that $\mathcal M_T^{\mathrm{per}}(X)\subset \mathcal M_T^{\mathrm{erg}}(X)$. 
Finally, a Borel measurable map   $T : [0,1] \to [0,1]$ on the interval $[0,1]$ is said to be a \emph{piecewise monotonic map} if there are disjoint open intervals $I_1, \ldots , I_k$ such that
$[0,1] \setminus \bigcup _{j=1}^kI_j $ is a finite set and 
 $T \vert _{I_j}$ is monotonic and continuous for each $1\leq j \leq k$.
We denote  by $h_{\mathrm{top}}(T,Z)$ the  topological entropy for a piecewise monotonic map $T$ on a
  Borel set $Z$, which was introduced by Hofbauer \cite{Hofbauer2010}  as a generalization of the Bowen's Hausdorff topological entropy for continuous maps  (see Subsection \ref{subsection:defofentropy} for precise definition).

Our main theorem is as follows:
\begin{maintheorem}\label{thm:main}
Let $T:[0,1]\to [0,1]$ be a transitive piecewise monotonic map.
Suppose that  $\mathcal M_T^{\mathrm{per}}([0,1])$ is dense in $\mathcal M^{\mathrm{erg}}_T([0,1])$. Then, for each   $\varphi \in \mathcal C([0,1])$, either $E(\varphi ) =\emptyset$  or $h_{\mathrm{top}}(T,E(\varphi )) = h_{\mathrm{top}}(T,[0,1])$.
\end{maintheorem}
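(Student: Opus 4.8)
The plan is to establish the nontrivial inequality in the dichotomy: assuming $E(\varphi)\neq\emptyset$, I will produce, for every $\ve>0$, a Borel set $Z_\ve\subset E(\varphi)$ with $h_{\mathrm{top}}(T,Z_\ve)>h_{\mathrm{top}}(T,[0,1])-\ve$, whence $h_{\mathrm{top}}(T,E(\varphi))=h_{\mathrm{top}}(T,[0,1])$ follows by letting $\ve\to0$, since the reverse inequality $h_{\mathrm{top}}(T,E(\varphi))\le h_{\mathrm{top}}(T,[0,1])$ is automatic from monotonicity of entropy under the inclusion $E(\varphi)\subset[0,1]$ (the case $h_{\mathrm{top}}(T,[0,1])=0$ being trivial). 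First I would invoke the characterization in \cite[Lemma~1.6]{Thompson2010} to translate $E(\varphi)\neq\emptyset$ into the existence of ergodic measures $\mu_1,\mu_2$ with $\int\varphi\,d\mu_1\neq\int\varphi\,d\mu_2$.

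Next I would pass to the coding space. Writing $(\Sigma,\sigma)$ for the countable-state topological Markov shift associated with the Hofbauer Markov diagram $\De$ of $T$, and $\pi\colon\Sigma\to[0,1]$ for the canonical bounded-to-one semiconjugacy, I would exploit that $h_{\mathrm{top}}(T,[0,1])$ equals the Gurevich entropy of $\De$, namely the supremum of $h_{\mathrm{top}}(\sigma,\Sigma_C)$ over finite irreducible subgraphs $C$ of $\De$, each $\Sigma_C$ being a topologically mixing subshift of finite type on which $\sigma$ enjoys the specification property. Transitivity of $T$ is the crucial hypothesis here: it guarantees that the entropy is carried by a single strongly connected component of $\De$ into which the periodic loops coding periodic orbits of $T$ can be steered by admissible connecting words in both directions.

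Fix $\ve>0$. Because $\int\varphi\,d\mu_1\neq\int\varphi\,d\mu_2$ and integration of the continuous function $\varphi$ is weak-$*$ continuous, the density of $\mathcal M_T^{\mathrm{per}}([0,1])$ in $\mathcal M_T^{\mathrm{erg}}([0,1])$ lets me pick periodic measures $\nu_1,\nu_2\in\mathcal M_T^{\mathrm{per}}([0,1])$ with $\int\varphi\,d\nu_1\neq\int\varphi\,d\nu_2$. I then choose a finite irreducible $C$ lying in the strongly connected component that contains the loops coding $\nu_1,\nu_2$ and with $h_{\mathrm{top}}(\sigma,\Sigma_C)>h_{\mathrm{top}}(T,[0,1])-\ve/2$; let $m_C$ be its measure of maximal entropy. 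Using affineness of the measure-theoretic entropy $h(\cdot)$ and of integration, the (nonergodic) measures $m_a:=(1-\delta)m_C+\delta\nu_1$ and $m_b:=(1-\delta)m_C+\delta\nu_2$ satisfy $h(m_a)=h(m_b)=(1-\delta)h(m_C)$ and $\int\varphi\,dm_a\neq\int\varphi\,dm_b$; taking $\delta$ small makes both entropies exceed $h_{\mathrm{top}}(T,[0,1])-\ve$ while keeping the two averages separated by a fixed gap $2\eta>0$.

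Finally I would run a Moran-type concatenation in $\Sigma$: using specification on $\Sigma_C$ together with the fixed connecting words joining $\Sigma_C$ to each periodic loop and back (of bounded length once $C$ and the $\nu_i$ are fixed), I build orbit segments realizing $m_a$ and $m_b$ to prescribed accuracy, and concatenate alternating blocks of these two types with rapidly growing lengths. The growth forces the Birkhoff averages of $\varphi$ to oscillate between values near $\int\varphi\,dm_a$ and $\int\varphi\,dm_b$, so the projected points lie in $E(\varphi)$; meanwhile the abundance of admissible $\Sigma_C$-words in the high-entropy phases yields, via a standard Bowen-ball counting and mass-distribution estimate transported through $\pi$ into Hofbauer's entropy for the discontinuous map $T$, the bound $h_{\mathrm{top}}(T,Z_\ve)\ge h_{\mathrm{top}}(T,[0,1])-\ve$. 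I expect the main obstacle to be exactly the substitute for the globally absent specification property: combining a near-maximal-entropy mixing SFT sitting inside $\De$ with the density of periodic measures, and in particular verifying that the periodic orbits realizing the two distinct averages can be connected, within a single strongly connected component of $\De$, to the entropy-carrying subgraph, so that the connecting segments occupy a vanishing orbit fraction and do not dilute the entropy. The careful bookkeeping of Hofbauer's noncompact entropy through the coding map $\pi$ is the accompanying technical difficulty.
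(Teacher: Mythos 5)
Your overall architecture is essentially the paper's: pass to Hofbauer's Markov diagram, use transitivity of $T$ to locate a single entropy-carrying strongly connected component, use density of periodic measures to plant two distinct $\varphi$-averages inside it, enlarge a near-maximal-entropy finite subgraph by the periodic loops so as to gain the specification property, and then run the standard irregular-set machinery and let $\epsilon\to 0$. Two of your choices are genuine (and legitimate) variants: you take \emph{two} periodic measures with distinct averages and form convex combinations with the measure of maximal entropy of the finite subgraph, whereas the paper takes \emph{one} periodic measure and invokes Takahasi's entropy-approachability theorem \cite{Takahasi2019} to find an ergodic measure of nearly full entropy that is weak-$*$ close to the measure of maximal entropy of $\Sigma_T^+$ (so that its $\varphi$-average stays away from the periodic one); and you redo the Moran-type concatenation by hand, whereas the paper simply applies \cite[Theorem 3.1]{CKS2005} to the compact invariant set $\Sigma^+_\epsilon=\Psi(\Sigma^+_{M(\mathcal F)})$, which has specification as a factor of a transitive finite-alphabet Markov shift.

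There is, however, a genuine gap at the step where you ``pick periodic measures $\nu_1,\nu_2\in\mathcal M_T^{\mathrm{per}}([0,1])$'' and then speak of ``the loops coding $\nu_1,\nu_2$.'' Your hypothesis concerns periodic measures of $T$ on the \emph{interval}, but your construction needs periodic \emph{loops in the diagram}, i.e.\ periodic points of the symbolic system realizing two distinct averages of $\varphi\circ\Phi$. A periodic orbit of $T$ that passes through an endpoint of a monotonicity interval lies outside $X_T$, the coding map $\mathcal I$ is not defined on it, the semiconjugacy $\Phi\circ\sigma=T\circ\Phi$ can fail along any symbolic representative, and consequently such an orbit need not be coded by any periodic sequence in $\Sigma_T^+$ with the same $\varphi$-average. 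Nor can such orbits simply be avoided: there are only finitely many of them, but one of them could be isolated in $\mathcal M_T^{\mathrm{erg}}([0,1])$ and could be the \emph{only} ergodic measure realizing one of the two distinct averages, in which case your selection of $\nu_1,\nu_2$ among ``codable'' orbits is impossible. This interval-to-symbolic transfer is exactly why the paper's proof of the theorem begins by invoking \cite[Theorem A]{Y} to convert the hypothesis into density of $\mathcal M_\sigma^{\mathrm{per}}(\Sigma_T^+)$ in $\mathcal M_\sigma^{\mathrm{erg}}(\Sigma_T^+)$; only after that does it work with periodic points of $\Sigma_T^+$, which lift to loops in the transitive component $\mathcal C_0$ by \cite[Theorem 8]{Hofbauer1986}. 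Your proposal is silent on this point, and without it the argument does not start.

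A second, lesser issue is that the facts you defer as ``the main obstacle'' are precisely the load-bearing inputs, and they should be cited rather than left as expectations: the existence of a transitive subdiagram $\mathcal C_0$ with $\Psi(\Sigma^+_{M(\mathcal C_0)})=\Sigma_T^+$ is \cite[Theorem 11]{Hofbauer1986}; the fullness of the entropy carried by $\mathcal C_0$ rests on the existence and uniqueness of the maximal measure \cite{Hofbauer1981}; preservation of measure-theoretic entropy under the countable-to-one projection $\Psi$ is Ledrappier--Walters \cite{LW1977}; and specification for the projected finite subsystem comes from the fact that a factor of a system with specification has specification \cite{DGS2006}. With these citations and the transfer result above supplied, your argument closes along the same lines as the paper's; as written, it assumes the two points (symbolic periodic orbits with distinct averages, and their location in the entropy-carrying component) on which the theorem actually turns.
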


Theorem \ref{thm:main} is a generalization of Thompson's theorem \cite[Theorem 5.1]{Thompson2012}
  for irregular sets of $\beta$-transformations. 
We will reduce Theorem \ref{thm:main} to  the following analogous result on coding spaces (see Subsection \ref{subsection:2.3} for definitions).

\begin{maintheorem}\label{thm:main2}
Let $T\colon [0,1]\to [0,1]$ be a transitive piecewise monotonic map,
$\Sigma_T^+$  the coding space of $T$ and $\sigma\colon\Sigma_T^+\to \Sigma_T^+$  the left shift operator.
Suppose that $\mathcal{M}_{\sigma}^{\mathrm{per}}(\Sigma_T^+)$ is dense in $\mathcal{M}_{\sigma}^{\mathrm{erg}}
(\Sigma_T^+)$. Then for each $\varphi\in \mathcal C(\Sigma_T^+)$, either $E(\varphi)=\emptyset$ or
$h_{\rm top}(\sigma,E(\varphi))=h_{\rm top}(\sigma,\Sigma_T^+)$.
\end{maintheorem}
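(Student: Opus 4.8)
The plan is to prove the nontrivial inequality $h_{\mathrm{top}}(\sigma,E(\varphi))\ge h_{\mathrm{top}}(\sigma,\Sigma_T^+)$ under the assumption $E(\varphi)\neq\emptyset$; the opposite inequality holds trivially since $E(\varphi)\subseteq\Sigma_T^+$ and $h_{\mathrm{top}}(\sigma,\cdot)$ is monotone under inclusion. First I would invoke the characterisation of non-emptiness in \cite[Lemma 1.6]{Thompson2010} to obtain ergodic measures $\mu_1,\mu_2\in\mathcal M_\sigma^{\mathrm{erg}}(\Sigma_T^+)$ with $\int\varphi\,d\mu_1\neq\int\varphi\,d\mu_2$. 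Fixing $\ve>0$, the variational principle supplies an ergodic $\nu$ with $h_\nu(\sigma)>h_{\mathrm{top}}(\sigma,\Sigma_T^+)-\ve$. For a small parameter $t\in(0,1)$ I set $\alpha_i=(1-t)\nu+t\mu_i$ and consider the segment
\[
K=\{\,(1-s)\alpha_1+s\alpha_2:s\in[0,1]\,\}\subseteq\mathcal M_\sigma(\Sigma_T^+),
\]
which is compact and connected. By affinity of the Kolmogorov--Sinai entropy, every $\mu\in K$ satisfies $h_\mu(\sigma)\ge(1-t)h_\nu(\sigma)>(1-t)(h_{\mathrm{top}}(\sigma,\Sigma_T^+)-\ve)$, while the endpoints obey $\int\varphi\,d\alpha_1-\int\varphi\,d\alpha_2=t(\int\varphi\,d\mu_1-\int\varphi\,d\mu_2)\neq0$.

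Next I would realise $K$ through its saturated set
\[
G_K=\Big\{\,x\in\Sigma_T^+:\ \bigl(\tfrac1n\sum_{j=0}^{n-1}\delta_{\sigma^jx}\bigr)_{n\ge1}\ \text{accumulates, in the weak topology, exactly on } K\,\Big\},
\]
and establish (i) $G_K\subseteq E(\varphi)$ and (ii) $h_{\mathrm{top}}(\sigma,G_K)\ge\inf_{\mu\in K}h_\mu(\sigma)$. Statement (i) is immediate: for $x\in G_K$ the empirical measures accumulate on both $\alpha_1$ and $\alpha_2$, so the averages $\tfrac1n\sum_{j=0}^{n-1}\varphi(\sigma^jx)$ accumulate on the two distinct values $\int\varphi\,d\alpha_1$ and $\int\varphi\,d\alpha_2$ and hence diverge. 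Granting (ii) and combining it with the entropy bound on $K$ above gives $h_{\mathrm{top}}(\sigma,E(\varphi))\ge h_{\mathrm{top}}(\sigma,G_K)\ge(1-t)(h_{\mathrm{top}}(\sigma,\Sigma_T^+)-\ve)$; letting $t\to0$ and $\ve\to0$ then finishes the proof.

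The core of the argument, and the step I expect to be the main obstacle, is the lower bound (ii). Here I would follow the Moran-type scheme of Thompson \cite{Thompson2012} (building a measure supported on $G_K$ and estimating its local entropy from below): discretise $K$ into measures $\beta_1,\dots,\beta_m$ spaced along the segment; for each $\beta_j$ produce, via the Shannon--McMillan--Breiman theorem, a family of roughly $e^{\ell h_{\beta_j}}$ admissible words of a common length $\ell$ whose empirical distributions lie near $\beta_j$; and concatenate one word from each family in a cyclic, slowly lengthening pattern, so that the empirical measures of the resulting points sweep out all of $K$ while the branching forces $h_{\mathrm{top}}(\sigma,G_K)\ge\min_j h_{\beta_j}-o(1)$. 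The single place where Thompson uses (almost) specification is the gluing: joining a word approximating $\beta_j$ to the next word approximating $\beta_{j+1}$ by a connecting block of sublinear length. This is exactly where I would instead bring in the hypothesis that $\mathcal M_\sigma^{\mathrm{per}}(\Sigma_T^+)$ is dense in $\mathcal M_\sigma^{\mathrm{erg}}(\Sigma_T^+)$: I would approximate $\nu$ and each $\mu_i$ by periodic measures, replace the approximating words by the corresponding periodic words, and use the transitivity of $\sigma$ together with the irreducible-component structure of Hofbauer's Markov diagram \cite{Hofbauer2010} to splice these periodic words inside a single transitive piece by connecting paths. The delicate point---genuinely the heart of the paper---is to guarantee that such connecting paths can be chosen of length $o(\ell)$ \emph{uniformly} over the exponentially many entropy-carrying words, so that neither the entropy count nor the weak convergence of the empirical measures is destroyed; it is precisely this uniform connectivity that the density of periodic measures must provide in place of specification.
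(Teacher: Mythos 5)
Your scaffolding (the contrast measures $\mu_1,\mu_2$ from \cite[Lemma 1.6]{Thompson2010}, a high-entropy $\nu$, the segment $K$, the saturated set $G_K$ with $G_K\subseteq E(\varphi)$) is sound, and you correctly locate the crux at the entropy lower bound (ii) and the gluing step. But the mechanism you propose for that step would fail, for two reasons. First, replacing the Shannon--McMillan--Breiman word families for $\nu$ by periodic words destroys the entropy count: density of $\mathcal{M}^{\mathrm{per}}_{\sigma}(\Sigma_T^+)$ is a statement in the weak topology only, periodic measures have zero entropy, and a periodic approximant contributes essentially one word per length rather than $e^{\ell h}$ words, so the branching that is supposed to force $h_{\mathrm{top}}(\sigma,G_K)\ge\min_j h_{\beta_j}$ disappears. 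Second, the density of periodic measures does not provide uniform (or even sublinear) connecting paths in any form; no such implication holds, and the paper never derives connectivity from it. In the paper the density hypothesis is used exactly once, for an entirely different purpose: to replace the contrasting measure $\mu$, which may fail to lift to Hofbauer's Markov diagram (this non-liftability is precisely the obstruction discussed in Remark 3.2), by a periodic measure $\mu_{\mathrm{per}}$ whose orbit does lift to finitely many vertices of the diagram by \cite[Theorem 8]{Hofbauer1986}.

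What actually makes the argument work---and what is missing from your sketch---is the confinement of all relevant data inside a single \emph{finite} transitive subdiagram, where specification is automatic. Concretely, the paper combines: (a) Hofbauer's theorems giving a unique measure of maximal entropy $m$ on $\Sigma_T^+$ together with a lift $\widetilde{m}$ to the transitive component $\Sigma^+_{M(\mathcal{C}_0)}$ of the Markov diagram; (b) Takahasi's entropy-approachability theorem for transitive countable Markov shifts \cite[Main Theorem]{Takahasi2019}, which produces an ergodic $\widetilde{\nu}$ supported on a finite subdiagram $\Sigma^+_{M(\mathcal{F}_1)}$, weakly close to $\widetilde{m}$ and with $h(\widetilde{\nu})\ge h(m)-\epsilon$; and (c) the elementary facts that a finite transitive Markov shift has specification and that specification passes to factors, so that after enlarging $\mathcal{F}_1\cup\mathcal{F}_2$ (with $\mathcal{F}_2$ the vertices of the lifted periodic orbit) to a suitable finite $\mathcal{F}\subset\mathcal{C}_0$, the compact invariant set $\Sigma^+_\epsilon:=\Psi(\Sigma^+_{M(\mathcal{F})})$ satisfies specification, carries both $\nu:=\widetilde{\nu}\circ\Psi^{-1}$ and $\mu_{\mathrm{per}}$ (hence is nontrivial for $\varphi$), and has $h_{\mathrm{top}}(\sigma,\Sigma^+_\epsilon)\ge h_{\mathrm{top}}(\sigma,\Sigma_T^+)-\epsilon$. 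On this subsystem no Moran construction needs to be redone: \cite[Theorem 3.1]{CKS2005} applies as a black box to give $h_{\mathrm{top}}(\sigma,E(\varphi|_{\Sigma^+_\epsilon}))=h_{\mathrm{top}}(\sigma,\Sigma^+_\epsilon)$, and letting $\epsilon\to 0$ finishes. In short, the paper sidesteps exactly the ``uniform connectivity over exponentially many words'' that your plan requires, by first trading the variational-principle measure for a measure living on a finite subdiagram; your proposal leaves that confinement step---the real content of the theorem---unproved, and the hypothesis you hoped would supply it cannot do so.
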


\subsection{Applications}\label{subsection:application}
We will see in Remark \ref{rmk:1021} that  one can easily 
  construct \emph{non-transitive} piecewise monotonic maps $T$ and continuous maps $\varphi$ for which the dichotomy ``$E(\varphi )  = \emptyset$ or $h _{\mathrm{top}}(T,E(\varphi ) ) = h_{\mathrm{top}} (T,[0,1])$'' in Theorem \ref{thm:main} do not hold, 
   so the transitivity condition is indispensable for our purpose.
 On the other hand, 
 the  property 
 that $\mathcal M_T^{\mathrm{per}}([0,1])$ is dense in
$\mathcal M_T^{\mathrm{erg}}([0,1])$  has been intensively studied by many authors independently from irregular sets (e.g.~\cite{Hofbauer1987, Hofbauer1988, Blokh1995, HR1998})
and shown to hold for a large class of piecewise monotonic maps.  
We 
 recall that Hofbauer and Raith \cite{HR1998, Raith} proposed a problem asking whether  $\mathcal M_T^{\mathrm{per}}([0,1])$ is dense in
$\mathcal M_T^{\mathrm{erg}}([0,1])$ for any transitive piecewise monotonic maps with positive topological entropy.
The 
Hofbauer-Raith problem 
 has a positive answer  in the following three important cases.
 In these cases, 
 we can apply 
 Theorems \ref{thm:main} and \ref{thm:main2} if the map $T$ is transitive:

\begin{itemize}
\item The map $T$ is a \emph{continuous} map with positive topological entropy. It follows from \cite[Corollary 10.5]{Blokh1995} that $\mathcal{M}_T^{\mathrm{per}}([0,1])$ is dense in $\mathcal{M}_T^{\mathrm{erg}}([0,1])$ in this case. 
The result was motivated by the density of periodic measures for continuous maps with the specification property (\cite{DGS2006}).
\item  
The map $T$ is a \emph{monotonic mod one transformation} (that is, there exists a strictly increasing and continuous function $f : [0,1] \to \mathbb R$  such that $T(x) = f(x) \mod 1$; this is also called a \emph{Lorenz map}) with positive topological entropy. 
The density of periodic measures was proven in \cite[Theorem 2]{Hofbauer1987}. 
A special case of this class is a \emph{linear mod one transformation}, 
 which was first introduced by Parry (\cite{Parry1964}) and 
 defined by
\[
\label{d-mod-1}
T(x)=
\left\{
\begin{array}{ll}
\beta x+\alpha\ \; \text{mod}\ 1 & (x\in [0,1))\\
\displaystyle\lim_{y\rightarrow 1-0}(\beta y+\alpha\ \; \text{mod}\ 1) & (x=1)
\end{array}
\right.
\]
 with $\beta>1$ and $0\le \alpha <1$. 
When $\alpha =0$, the map $T$ is called the \emph{$\beta$-transformation} (its irregular sets were investigated in  \cite{Thompson2012}).
Another special case of monotonic mod one transformations appears in the Poincar\'e maps of geometric Lorentz flows
(refer to \cite{KLS2016} in which  irregular sets for geometric Lorentz flows  were shown to be residual).
\item The map $T$ has \emph{two} intervals of monotonicity  with positive topological entropy. 
That is, 
there is a point $0<a<1$ such that both 
 $T \vert _{(0,a)}$ and $T\vert _{(a,1)}$ are monotonic and continuous. 
 The density of periodic measures in this case was shown in \cite[Theorem 2]{HR1998}.
\end{itemize}
 In particular, we can and do apply Theorems \ref{thm:main} and \ref{thm:main2} to all transitive monotonic mod one transformations and all transitive piecewise monotonic maps with two monotonic pieces. To the best of our knowledge, this is the first result for Pesin-Pitskel type dichotomy for irregular sets of these transformations.

\section{Preliminaries}
\subsection{Topological entropy for non-compact sets}\label{subsection:defofentropy}
In this subsection, we recall the definition of the topological entropy for non-compact sets.
Let $X$ be a compact metric space with a metric $d$. We endow it with the Borel $\sigma$-field. 
Let $T\colon X\to X$  be a  measurable map.

Firstly, we  briefly recall the definition of the (Bowen's Hausdorff) topological entropy $h_{\mathrm{top}}(T,Z)$
for any continuous map $T$ and   Borel subset $Z\subset X$.
For $n\ge 1$, $x\in X$ and $\epsilon>0$, we let
$B_n(x,\epsilon)$ be the $\epsilon$-ball of center $x$ with respect to the $n$-th Bowen-Dinaburg metric $d_n$ 
given by $d_n(x,y) = \max \{ d(T^j(x),T^j(y)) \mid 0\leq j\leq n-1 \}$. 
For $s\in\mathbb{R}$, $L\in\mathbb{N}$ and $\epsilon>0$, we set
\[
M(Z,s,L,\epsilon)=\inf_{\Gamma}\left\{\sum_{B_{n_i}(x_i,\epsilon)\in\Gamma}e^{-sn_i}\right\}
\]
where the infimum is taken over all $\Gamma =\{B_{n_i}(x_i,\epsilon)\} _i$ being a finite or countable cover of $Z$ such that $x_i \in X$ and $n_i \geq L$ for all $i$,
with the convention $M(\emptyset ,s,L,\epsilon)=0$.
Since the quantity $M(Z,s,L,\epsilon)$ does not decrease with $L$, we can define $m(Z,s,\epsilon)$ given by
\[
m(Z,s,\epsilon)=\lim_{L \rightarrow \infty}
M(Z, s, L, \epsilon).
\]
Define 
\[
h_{\mathrm{top}}(T,Z,\epsilon)= \inf\{s \in \mathbb R \mid M(Z,s,\epsilon)=0\} =\sup\{s \in \mathbb R \mid M(Z,s,\epsilon)=\infty\},
\]
whose existence is easily seen 
 by the standard argument.
Finally we define
\[
h_{\mathrm{top}}(T,Z) =\lim_{\epsilon \rightarrow 0}h_{\mathrm{top}}(T,Z,\epsilon)
\]
and call it the \textit{topological entropy} of $Z$.

It is easy to see that $h_{\mathrm{top}}(T,Z_1)\le h_{\mathrm{top}}(T,Z_2)$ if $Z_1\subset Z_2\subset Y$.
Moreover, 
 $h_{\mathrm{top}}(T,X)$ coincides with the usual topological entropy
of $T: X\to X$  (we here mean by the usual topological entropy  the topological entropy in the sense of Adler-Konheim-McAndrew or Bowen-Dinaburg; see e.g.~\cite{HNP2008}). 
   In particular, by the classical variational principle 
    (for the Adler-Konheim-McAndrew  topological entropy in \cite{Goodman1971}; see also \cite[Corollary 8.6.1.(i)]{Walters2000}), 
   we have
\begin{equation}\label{eq:0202}
h_{\mathrm{top}}(T,X)=\sup_{\mu\in\mathcal{M}_T^{\mathrm{erg}}(X)}h(\mu),
\end{equation}
where $h(\mu)$ denotes the \textit{metric entropy}
of 
 $\mu$.

Next we recall the definition of the (Hofbauer's) topological entropy for a piecewise monotonic map $T$.
Let $\mathcal J$ be a partition of $[0,1]$ consisting of  monotonicity open intervals $I_1, \ldots , I_k$  of $T$ (i.e.~$I_1, \ldots , I_k$ are  disjoint,
$ [0,1] \setminus \bigcup _{j=1}^kI_j$ is a finite set and 
 $T \vert _{I_j}$ is monotonic and continuous for each $1\leq j \leq k$).
 Set $\mathcal J_\ell  =\bigvee _{i=0}^{\ell -1} T^{-i}\mathcal J$ for $\ell \geq 1$ and $\mathcal K_L =\bigcup _{\ell =L}^\infty \mathcal J_\ell $ for $L\geq 1$.
For $s \geq 0$ and $Z \subset [0, 1]$, we define
\[
\widetilde m(Z,s)=\lim_{L \rightarrow \infty}
\widetilde M(Z, s, L),
\]
where
\[
\widetilde M(Z, s, L) =\inf_{\Gamma}\sum_{B\in\Gamma}e^{-sn(B)},
\]
where  the infimum is taken over   the set of all finite or countable subsets $\Gamma$ of $\mathcal K_L$ with 
$Z \subset \bigcup _{B\in \Gamma} B$ and $n(B)$ is the maximal integer $\ell$ such that $B\in \mathcal J_\ell$.
It is shown in \cite[Lemma 2]{Hofbauer2010} that
for each subset $Z$ of $[0, 1]$, there is $s_0\geq  0$ independently of the choice of $\mathcal J$ such that $\widetilde m(Z,s) = \infty$ for all $s < s_0$ and
$\widetilde m(Z,s) =0$ for all $s > s_0$.
Furthermore, this value is proven to coincide with the Bowen's Hausdorff entropy of $Z$ whenever $Z$ is a subset of 
the maximal continuity set $X_T$, 
\begin{equation}\label{eq:1028b}
X_T=[0,1] \setminus \bigcup _{n=0}^\infty T^{-n} (\{ i_0, \ldots ,i_k\}),
\end{equation}
where $i_0, \ldots ,i_k$ are the endpoints of $I_1, \ldots ,I_k$
(\cite[Lemma 3]{Hofbauer2010}; note that $X_T$ is invariant and $T: X_T\to X_T$ is continuous).
Hence, we can consistently use the notation $h_{\mathrm{top}}(T,Z)$ to denote the value $s_0$.
Note also that
 $ \bigcup _{n=0}^\infty T^{-n} (\{ i_0, \ldots ,i_k\})$ is only countable, so 
 it follows from the argument above Lemma 1 of \cite{Hofbauer2010} that 
 \begin{equation}\label{eq:1028}
 h_{\mathrm{top}}(T,Z\cap X_T) = h_{\mathrm{top}}(T,Z)
 \end{equation}
 for any subset $Z$ of $[0,1]$.

\subsection{Symbolic dynamics}
Let $D$ be a countable set. 
Denote by $D^{\mathbb N}$ the one-sided  infinite product of $D$ 
 equipped with the product topology of the discrete topology of $D$.
Let $\sigma$ be the left shift operator of $D^{\mathbb N}$ 
 (i.e.~$(\sigma (x))_j = x_{j+1}$ for each $j\in \mathbb N$ 
 and $x= ( x_j)_{j\in \mathbb N} \in D^{\mathbb N}$). 
When a subset  $\Sigma ^+$ of $D^{\mathbb N}$ is $\sigma$-invariant and  closed,  
we call it 
 a \emph{subshift}, 
and $D$  the \emph{alphabet} of 
$\Sigma ^+$.
When 
 $\Sigma ^+$ is of the form
\[
\Sigma ^+ =\{ x\in D^{\mathbb N} \mid \text{$M_{x_j x_{j+1}} =1 $ for all $j\in \mathbb N$}\}
\]
with  a matrix $M= (M_{ij})_{(i,j)\in D^2}$  each entry of which is $0$ or $1$, we call $\Sigma ^+$ a \emph{Markov shift}.
When we emphasize the dependence of $\Sigma ^+$ on $M$, it is denoted by $\Sigma _M^+$, and $M$ is called the \emph{adjacency matrix} of $\Sigma^+_M$.

For a subshift $\Sigma^+$ on an alphabet $D$, let   $[x] = \left\{ y\in \Sigma^+ \mid (y_1, \ldots ,y_n) =x\right\}$ for each $x\in D^n$, $n\geq 1$, and set
$
\mathcal L(\Sigma^+ ) =\big\{ x\in \bigcup _{n\geq 1} D^n \mid [x] \neq \emptyset \big\} .
$
We say that a subshift $\Sigma^+$ on a finite alphabet satisfies the \emph{specification property} if
there is an integer $L>0$ such that for any $x ,y\in\mathcal{L}(\Sigma^+ )$,
one can find $z\in\mathcal{L}(\Sigma^+ )$ with $|z|\le L$ such that
$xzy\in\mathcal{L}(\Sigma^+ )$, where $|z |$ is the integer $n$ such that $z\in D^n$.
 (Notice that Bowen \cite{Bowen1974} originally called the specification property the above condition with $\vert z\vert =L$ instead of $\vert z\vert \leq L$, which seems to be more standard among dynamicists, 
but  the difference makes no influence on this paper and we permit us to adopt our definition for simplicity.)
The following is 
elementary but
 important in the proof of Theorem \ref{thm:main2}.

\begin{lem}\label{lem:key2}
Let $\Sigma^+_M$ be a transitive  Markov shift with an adjacency matrix $M$ on a finite alphabet $D$.
Then $\Sigma^+_M$ satisfies the  specification property.
\end{lem}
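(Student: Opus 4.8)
The plan is to translate the specification property into a purely combinatorial statement about the directed graph $G$ associated with $M$, whose vertices are the symbols of $D$ and in which there is an edge $a\to b$ precisely when $M_{ab}=1$. Write $D'=\{a\in D\mid [a]\neq\emptyset\}$ for the set of symbols that actually occur, and note that a word $w$ lies in $\mathcal{L}(\Sigma_M^+)$ exactly when it spells out a directed path in $G$ whose terminal symbol lies in $D'$; consequently every symbol of any $w\in\mathcal{L}(\Sigma_M^+)$ belongs to $D'$, and in particular so do its first and last symbols. Under this dictionary, finding a connecting word $z$ with $xzy\in\mathcal{L}(\Sigma_M^+)$ amounts to finding a directed path in $G$ from the last symbol $a$ of $x$ to the first symbol $b$ of $y$, while the uniform bound $|z|\le L$ amounts to a uniform bound on the length of such a path. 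I would therefore prove the lemma in two steps: first, that transitivity forces the subgraph of $G$ on $D'$ to be strongly connected; second, that strong connectivity of a finite graph supplies the required uniform bound.

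For the first step, let $\xi\in\Sigma_M^+$ have dense forward orbit. If $\Sigma_M^+$ is finite, then being transitive it is a single periodic orbit and the conclusion is immediate, so assume $\Sigma_M^+$ is infinite; then it has no isolated points. The key observation is that for every $k\ge 0$ the forward orbit of $\sigma^k\xi$ is again dense, since it is obtained from the dense set $\{\sigma^n\xi\mid n\ge 0\}$ by deleting the finitely many points $\xi,\dots,\sigma^{k-1}\xi$, and deleting finitely many points from a dense subset of a space without isolated points leaves a dense subset. Now fix $a,b\in D'$. Since $[a]$ is a nonempty open set, $a$ occurs in $\xi$, say $\xi_p=a$; applying the density of the orbit of $\sigma^{p}\xi$ to the open set $[b]$, we obtain an occurrence $\xi_q=b$ with $q>p$, and the segment $\xi_p\xi_{p+1}\cdots\xi_q$ is a directed path in $G$ from $a$ to $b$. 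Hence every ordered pair of vertices of $D'$ is joined by a directed path, i.e.\ $G$ restricted to $D'$ is strongly connected. This is the only genuinely non-formal point: a single occurrence of each symbol in $\xi$ yields only one direction of connectivity, so one must exploit that every tail $\sigma^k\xi$ is still transitive in order to place an occurrence of $b$ \emph{after} a prescribed occurrence of $a$.

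For the second step, put $N=\#D'<\infty$. Given $a,b\in D'$, strong connectivity provides a directed path from $a$ to $b$, which after deleting repeated vertices has edge-length at most $N$. If this path already has an interior vertex we keep it; in the remaining case $a\to b$ is a single edge, and we instead follow it by a cycle at $b$ (a path $b\to\cdots\to a$ of length at most $N-1$ followed by the edge $a\to b$), producing a walk from $a$ to $b$ of edge-length at most $N+1$ and with at least one interior vertex. In either case the interior vertices spell a word $z$ with $1\le|z|\le N$; it lies in $\mathcal{L}(\Sigma_M^+)$ because it is admissible and its terminal symbol lies in $D'$, and by construction $M_{az_1}=\cdots=M_{z_{|z|}b}=1$, so $xzy$ is admissible and, ending in $y$, belongs to $\mathcal{L}(\Sigma_M^+)$. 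Taking $L=\#D'$ (which depends only on $\Sigma_M^+$) thus verifies the specification property; the bound here is routine finite graph theory once strong connectivity is in hand.
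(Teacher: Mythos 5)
Your Step 2 (strong connectivity of a finite graph gives a uniform connecting-path bound, and the interior vertices of the path spell the word $z$) is correct and is exactly the combinatorial core of the paper's proof. The difference is upstream: the paper reads transitivity of a Markov shift as irreducibility of $M$ --- it asserts outright that for every $(i,j)\in D^2$ there is $L(i,j)>0$ with $M^{L(i,j)}_{ij}>0$ and is finished in two lines --- whereas you try to derive this from the paper's literal definition of transitivity (existence of a dense forward orbit). That derivation contains a genuine error in the finite case. The claim ``finite and transitive $\Rightarrow$ a single periodic orbit'' is false: take $D=\{0,1\}$ with $M_{10}=M_{00}=1$ and $M_{01}=M_{11}=0$. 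Then $\Sigma^+_M=\{10^\infty,\,0^\infty\}$ is the forward orbit of $10^\infty$, hence has a dense forward orbit, but it is not a periodic orbit; worse, the specification property genuinely fails for it, since $1\in\mathcal{L}(\Sigma^+_M)$ but no admissible word contains the symbol $1$ twice, so no $z$ with $1z1\in\mathcal{L}(\Sigma^+_M)$ exists. So in the finite case there is no repair: under the bare dense-orbit reading the lemma itself has counterexamples, and the statement must be understood (as the paper's own proof implicitly does, and as holds for the shifts $\Sigma^+_{M(\mathcal{C}_0)}$ and $\Sigma^+_{M(\mathcal{F})}$ coming from Hofbauer's theorem, where the lemma is actually applied) with ``transitive'' meaning irreducible.

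In the infinite case there is a second, smaller gap: the assertion ``infinite, hence no isolated points'' is not a formality. An infinite transitive subshift can have isolated points --- e.g.\ the orbit closure of $\xi=1\,0\,1\,0^2\,1\,0^3\,1\,0^4\cdots$ in the full $2$-shift, in which the transitive point $\xi$ itself is isolated (it is the only point of the closure beginning with $101$). The claim is true for finite-alphabet Markov shifts, but it needs an argument special to them: if $[x_1\cdots x_n]\cap\Sigma^+_M=\{x\}$, then for every $m\ge n$ the vertex $x_m$ admits a unique infinite admissible continuation, so by pigeonhole on the finite alphabet $x$ is eventually periodic and its forward orbit is finite; since the dense orbit must enter the open set $\{x\}$, the whole orbit closure would then be finite, contradicting infiniteness. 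With that supplied, your infinite-case argument (every tail $\sigma^k\xi$ of a transitive point is again transitive when there are no isolated points, which places an occurrence of $b$ after any occurrence of $a$ and yields strong connectivity on $D'$) is correct, and is in fact more careful than the paper's proof on precisely the point the paper elides.
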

\begin{proof}
Since $\Sigma^+_M$ is transitive, for any $(i, j)\in D^2$, there exists
an integer $L(i ,j )>0$ such that $M^{L(i,j )}_{ij}>0$.
We set $L=\max_{(i, j)\in D^2}L(i, j)$.
Let $x,y\in\mathcal{L}(\Sigma^+_M)$ and denote by $n$ the length of $x$.
Since $M^{L(x_n , y_1 )}_{x_ny_1}>0$, we can find $z_1\cdots z_{\ell} \in\mathcal{L}(\Sigma^+_M)$ such that
$\ell =L(x_n, y_1) -1\le L$ and $M_{x_n z_1}=M_{z_{\ell } y_1}=1$, which imply that $xzy\in\mathcal{L}(\Sigma^+_M)$.
\end{proof}

\subsection{Markov diagram for piecewise monotonic maps}
\label{subsection:2.3}

Fix a transitive piecewise monotonic map $T: [0,1] \to [0,1]$ and let $X_T $ be the invariant set given in \eqref{eq:1028b}.
Define the coding map $\mathcal I : X_T \to \{ 1, \ldots ,k\} ^{\mathbb N}$ of $T$
 by
\[
(\mathcal I(x) )_j = \ell \quad \text{if $T^{j-1}(x) \in I_\ell$} .
\]
We note that $\mathcal{I}$ is well-defined and injective since $T$ is transitive.
Denote the closure of $\mathcal I(X_T) $ by $\Sigma ^+_T$ and call it the \emph{coding space} of $T$.
Observe that $\Sigma ^+_T$ is a compact $\sigma$-invariant set.
Again by the transitivity of $T$, we can easily see that $\Sigma_T^+$ is transitive. 
In what follows  we will construct Hofbauer's Markov diagram, which is a countable oriented graph
with subsets of $\Sigma_T^+$ as vertices.

Let $D\subset\Sigma_T^+$ be a closed subset with $D\subset [i]$ for some $1\le i\le k$.
We say that a non-empty closed subset $C\subset\Sigma_T^+$ is a \textit{successor} of $D$ if
$C=[j]\cap\sigma(D)$ for some $1\le j\le k$.
Now we define a set $\mathcal{D}$ of vertices by induction. First, we set
$\mathcal{D}_0=\{[1],\ldots,[k]\}$. If $\mathcal{D}_n$ is defined for $n\ge 0$, then we define $\mathcal{D}_{n+1}$ by
\[
\mathcal{D}_{n+1}=
\left \{C \subset\Sigma_T^+\mid \text{there exists }D\in\mathcal{D}_n\text{ such that }C\text{ is a
successor of }D \right\}.
\]
We note that $\mathcal{D}_n$ is a finite set for each $n$ since the number of successors of any
closed subset of $\Sigma_T^+$ is
at most $k$ by the definition. Finally, we set
\[
\mathcal{D} =\bigcup_{n\ge 0}\mathcal{D}_n.
\]
To get the oriented graph, which we call \emph{Hofbauer's Markov diagram}, we insert an arrow from every
$D\in\mathcal{D}$ to all of the  successors of $D$. We write $D\rightarrow C$ to denote that
$C$ is a successor of $D$.
We define a matrix 
$M(\mathcal{D})=(M_{D C})_{(D,C)\in\mathcal{D}^2}$ by
\[
M_{D C}=
\left\{
\begin{array}{ll}
1 & (D\rightarrow C), \\
0 & (\text{otherwise}).
\end{array}
\right.
\]
Then $\Sigma^+_{M(\mathcal{D})}=\{(D_i)_{i\in\mathbb{N}}\in \mathcal{D}^{\mathbb{N}} \mid D_i\rightarrow D_{i+1} \; \text{for all $i\in\mathbb{N}$} \}$
is a Markov shift with a countable alphabet $\mathcal{D}$ and an adjacency matrix $M(\mathcal{D})$.
We define $\Psi\colon \Sigma^+_{M(\mathcal{D})}\to \{1,\ldots,k\}^{\mathbb{N}}$ by 
\[
\Psi((D_i)_{i\in\mathbb{N}}) =(x_i)_{i\in\mathbb{N}}
\quad  \text{for $(D_i)_{i\in\mathbb{N}}\in\Sigma^+_{M(\mathcal{D})}$},
\]
where $1\le x_i\le k$ is the unique integer such that $D_i\subset [x_i]$ holds for each $i\in\mathbb{N}$.
Then it is clear that $\Psi$ is continuous, countable-to-one,
and satisfies $\Psi\circ\sigma=\sigma\circ\Psi$.
We remark that $\Sigma^+_{M(\mathcal{D})}$ is not transitive in general although $\Sigma_T^+$ is transitive.
We use the following two theorems shown by Hofbauer.

\begin{thm}$\mathrm{(}$\cite[Theorem 11]{Hofbauer1986}$\mathrm{)}$
\label{irreducible}
Suppose that $h_{\rm top}(\sigma,\Sigma_T^+)>0$. Then we can
find a subset $\mathcal{C}_0\subset\mathcal{D}$ such that
$\Sigma^+_{M(\mathcal{C}_0)}$ is transitive and $\Psi(\Sigma^+_{M(\mathcal{C}_0)})=\Sigma_T^+$.
Here $M(\mathcal{C})=(M_{DC}(\mathcal{C}))_{(D,C)\in\mathcal{C}^2}$ denotes the submatrix of $M(\mathcal{D})$ for $\mathcal{C}\subset\mathcal{D}$.
\end{thm}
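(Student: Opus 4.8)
The plan is to translate the two requirements on $\mathcal{C}_0$ into purely graph-theoretic conditions on the Markov diagram and then to locate a single strongly connected piece of the diagram meeting both. First I would record the reductions. Transitivity of the countable Markov shift $\Sigma^+_{M(\mathcal{C}_0)}$ is equivalent to the induced subgraph on $\mathcal{C}_0$ being \emph{strongly connected}: for every pair $D,C\in\mathcal{C}_0$ there are directed paths $D\to\cdots\to C$ and $C\to\cdots\to D$ inside $\mathcal{C}_0$ (the proof is the irreducible-matrix version of Lemma \ref{lem:key2}). Secondly, since $\Psi$ is continuous and $\Sigma^+_{M(\mathcal{C}_0)}$ is compact, $\Psi(\Sigma^+_{M(\mathcal{C}_0)})$ is closed, so the surjectivity requirement $\Psi(\Sigma^+_{M(\mathcal{C}_0)})=\Sigma_T^+$ is equivalent to the \emph{density} in $\Sigma_T^+$ of the sequences $\Psi((D_i)_i)$ obtained from paths $(D_i)_i$ staying in $\mathcal{C}_0$; equivalently, every word of $\mathcal{L}(\Sigma_T^+)$ must be realized by a path that never leaves $\mathcal{C}_0$.

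For the existence of a strongly connected piece I would use positivity of the entropy. By the variational principle \eqref{eq:0202} there is an ergodic $\mu\in\mathcal{M}_\sigma^{\mathrm{erg}}(\Sigma_T^+)$ with $h(\mu)>0$. Since $\Psi$ is a countable-to-one topological factor map of the Markov diagram onto $\Sigma_T^+$, one lifts $\mu$ to an ergodic $\tilde\mu$ on $\Sigma^+_{M(\mathcal{D})}$ with $\Psi_*\tilde\mu=\mu$ and $h(\tilde\mu)=h(\mu)>0$ (the standard measure-lifting for Hofbauer diagrams). An ergodic measure on a countable Markov shift is carried by a single irreducible component, and positivity of its entropy forces that component to be nontrivial; thus $\supp\tilde\mu$ singles out a strongly connected $\mathcal{C}\subset\mathcal{D}$ on which $\Sigma^+_{M(\mathcal{C})}$ is transitive. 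This produces a candidate transitive component, but only guarantees $\Psi(\Sigma^+_{M(\mathcal{C})})\supset\supp\mu$, which need not be all of $\Sigma_T^+$.

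The main obstacle is precisely to upgrade this to a component that projects \emph{onto all of} $\Sigma_T^+$, and this is where the transitivity of $\Sigma_T^+$ must enter. Here I would bring in a point $\omega=(x_i)_i\in\Sigma_T^+$ with dense $\sigma$-orbit and its canonical lift: set $R_1=[x_1]$ and $R_{n+1}=[x_{n+1}]\cap\sigma(R_n)$, which one checks is a path in $\mathcal{D}$ with $\Psi((R_n)_n)=\omega$. The crux is a recurrence statement: the lift $(R_n)_n$ must eventually enter, and thereafter never leave, one strongly connected component $\mathcal{C}_0$. Granting this, every sufficiently large shift $\sigma^n\omega$ is $\Psi$ of a path lying in $\mathcal{C}_0$; as $\{\sigma^n\omega\}$ is dense and $\Psi(\Sigma^+_{M(\mathcal{C}_0)})$ is closed, the covering $\Psi(\Sigma^+_{M(\mathcal{C}_0)})=\Sigma_T^+$ follows, and $\mathcal{C}_0$ is the sought transitive component.

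Establishing recurrence of the lift is the hard point, requiring Hofbauer's finer structural analysis of the diagram, and I expect it to be the true obstacle. I would exploit the monotonicity built into the construction — successors are formed by intersecting the shift of a vertex with a cylinder, so along a path the vertices behave like a decreasing family of sets until a return, with the roots $[1],\dots,[k]$ lying above everything — together with positivity of entropy to rule out the lift escaping to infinity through ever-new transient vertices. Concretely, the positive-entropy component $\mathcal{C}$ from the second step supplies an absorbing recurrent core which the dense orbit, by transitivity, is forced to meet and re-enter infinitely often; combining this with the strong connectivity of that core yields a single $\mathcal{C}_0$ that is simultaneously transitive and onto. Checking that one and the same subset $\mathcal{C}_0$ realizes both features is the delicate bookkeeping constituting the substance of the theorem.
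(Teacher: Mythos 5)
This is a statement the paper does not prove at all --- it is imported verbatim from Hofbauer (\cite[Theorem 11]{Hofbauer1986}) --- so your proposal has to stand on its own, and it does not. The first concrete error is the claim that $\Sigma^+_{M(\mathcal{C}_0)}$ is compact, hence that $\Psi(\Sigma^+_{M(\mathcal{C}_0)})$ is closed. The alphabet $\mathcal{C}_0$ is in general countably infinite, and a Markov shift over an infinite alphabet is not compact; the paper itself stresses exactly this point in Remark \ref{rmk:3.2}. Without closedness of the image, both places where you invoke it collapse: the reduction of the surjectivity requirement $\Psi(\Sigma^+_{M(\mathcal{C}_0)})=\Sigma_T^+$ to a density statement, and the final step where density of $\{\sigma^n\omega\}_{n\ge N}$ inside the image is upgraded to equality with $\Sigma_T^+$. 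Since the whole point of the theorem is that the image of a \emph{non-compact} countable Markov shift fills out all of the compact space $\Sigma_T^+$, this cannot be waved away.

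The second, more fundamental, gap is that the crux you isolate --- that the canonical lift $(R_n)_n$ of a transitive point eventually enters and never leaves a single strongly connected component --- is essentially the content of Hofbauer's theorem, and you do not prove it; the heuristics offered in its place do not work. A maximal strongly connected component is not ``absorbing'': a vertex in it may have successors outside it, so meeting the positive-entropy component $\mathcal{C}$ does not trap the lift (what is true is the opposite-direction statement: once a path leaves a maximal component it can never return, so trapping is equivalent to some single vertex being visited infinitely often, i.e.\ to ruling out escape to infinity through ever-new vertices). Moreover, transitivity does not force the lift to meet $\mathcal{C}$ at all: by induction $R_{n+1}=\sigma^{n}([x_1\cdots x_{n+1}])$ is determined by the \emph{entire} initial word of $\omega$, so $\sigma^n\omega$ being close to $\supp\mu$ places no constraint on where in the diagram $R_{n+1}$ sits; and positive entropy of the system does not by itself exclude that the lift of one particular point escapes. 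Finally, even if trapping were granted, your concluding argument still needs the unjustified closedness, or else a stronger recurrence (return to one fixed vertex infinitely often along times at which $\{\sigma^n\omega\}$ accumulates at an arbitrarily prescribed $y\in\Sigma_T^+$); merely staying inside an infinite component $\mathcal{C}_0$ does not allow the limiting argument to go through. In short, you have correctly identified what must be proven and built a frame around it, but the theorem itself is left unproven.
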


\begin{thm}$\text{{\rm (}}$\cite[Theorems 1 and 2]{Hofbauer1981}$\text{{\rm )}}$
\label{maximal}
Suppose that $h_{\rm top}(\sigma,\Sigma_T^+)>0$ and
let $\mathcal{C}_0\subset \mathcal{D}$ be as in Theorem \ref{irreducible}. Then there
is a unique measure maximizing entropy both on $\Sigma^+_{M(\mathcal{C}_0)}$ and $\Sigma_T^+$,
i.e.~there is a unique probability measure $\widetilde{m}$ on $\Sigma^+_{M(\mathcal{C} _0)}$ and
a unique probability measure $m$ on $\Sigma_T^+$ such that
$h(\widetilde{m})=h(m)=h_{\rm top}(\sigma , \Sigma_T^+)$.
Furthermore, it holds that $m=\widetilde{m}\circ\Psi^{-1}$.
\end{thm}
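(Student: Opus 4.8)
The plan is to reduce the assertion to the existence-and-uniqueness theory for measures of maximal entropy on transitive countable Markov shifts, and then to verify the structural (positive recurrence) hypothesis that this theory requires, using crucially that $T$ has positive topological entropy and only finitely many branches.

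First I would show that the factor map $\Psi$ transports the maximal-entropy problem from $\Sigma_T^+$ to $\Sigma^+_{M(\mathcal{C}_0)}$ without loss of entropy. Since $\Psi$ is continuous, shift-commuting and countable-to-one, every $\sigma$-invariant measure $\widetilde{\mu}$ on $\Sigma^+_{M(\mathcal{C}_0)}$ satisfies $h(\Psi_*\widetilde{\mu})\le h(\widetilde{\mu})$, as $\Psi_*\widetilde{\mu}$ is a factor. For the reverse inequality I would use the fact, built into Hofbauer's diagram, that $\Psi$ is injective off a Borel set that is null for every ergodic measure of positive entropy; by the Abramov--Rokhlin fiber-entropy formula the relative entropy over $\Psi$ then vanishes, so $h(\widetilde{\mu})=h(\Psi_*\widetilde{\mu})$ for such measures. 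Conversely, by Theorem \ref{irreducible} we have $\Psi(\Sigma^+_{M(\mathcal{C}_0)})=\Sigma_T^+$, and I would check that every ergodic measure of positive entropy on $\Sigma_T^+$ lifts to an ergodic measure on $\Sigma^+_{M(\mathcal{C}_0)}$ of the same entropy. Together with the variational principle \eqref{eq:0202} applied to the compact system $\Sigma_T^+$, this yields
\[
\sup_{\widetilde{\mu}}h(\widetilde{\mu})=\sup_{\mu}h(\mu)=h_{\rm top}(\sigma,\Sigma_T^+),
\]
where the suprema run over invariant probability measures on $\Sigma^+_{M(\mathcal{C}_0)}$ and on $\Sigma_T^+$ respectively, and it sets up an entropy-preserving correspondence $\widetilde{\mu}\leftrightarrow\Psi_*\widetilde{\mu}$ between their maximizers.

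It therefore suffices to produce a unique measure of maximal entropy on the transitive countable Markov shift $\Sigma^+_{M(\mathcal{C}_0)}$. Here I would invoke Gurevich's theory: the Gurevich entropy of a transitive countable Markov shift equals the supremal measure entropy, and at most one ergodic invariant probability measure attains it, this maximizer (when it exists) being the Markov measure determined by the positive left and right eigenvectors of the transition matrix at the critical parameter $e^{-h_{\rm top}(\sigma,\Sigma_T^+)}$. Uniqueness of $\widetilde{m}$ follows at once, and hence so does uniqueness of $m$ on $\Sigma_T^+$: any maximizer downstairs has entropy $h_{\rm top}(\sigma,\Sigma_T^+)>0$, lifts to a maximizer upstairs by the first step, and the latter is forced to equal $\widetilde{m}$, so the maximizer downstairs equals $\Psi_*\widetilde{m}=\widetilde{m}\circ\Psi^{-1}=m$.

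The main obstacle is existence: a transitive countable Markov shift carries a measure of maximal entropy if and only if it is positive recurrent, and this can fail. The crux is thus to show that the Hofbauer diagram attached to a piecewise monotonic map with $h_{\rm top}(\sigma,\Sigma_T^+)>0$ is positive recurrent. I would follow Hofbauer's combinatorial estimates: fix a vertex of $\mathcal{C}_0$ and analyze the generating function of first-return loops through it. Using that $T$ has only finitely many monotonicity branches one controls the growth of loop counts, and the strict positivity of the entropy forces this loop series, evaluated at the radius of convergence $e^{-h_{\rm top}(\sigma,\Sigma_T^+)}$, to meet the positive-recurrence criterion (finite mass together with summable mean return time). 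This yields the eigenmeasure and eigenfunction producing $\widetilde{m}$ with $h(\widetilde{m})=h_{\rm top}(\sigma,\Sigma_T^+)$; pushing forward gives $m=\widetilde{m}\circ\Psi^{-1}$ with $h(m)=h(\widetilde{m})=h_{\rm top}(\sigma,\Sigma_T^+)$ by the first step, completing the argument. Verifying positive recurrence quantitatively — rather than merely excluding it or settling for null recurrence — is precisely where the positivity of the entropy and the finite-branch structure of $T$ enter essentially, and it is the delicate heart of the proof.
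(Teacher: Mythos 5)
First, note that the paper offers no proof of this statement at all: it is quoted, with attribution, from Hofbauer \cite{Hofbauer1981} (Theorems 1 and 2 there), so the only meaningful comparison is with Hofbauer's original argument. Measured against that, your outline reconstructs the architecture of the cited proof correctly: Hofbauer does work on the Markov diagram, does invoke the Vere-Jones/Gurevich classification (a transitive countable Markov shift carries at most one measure of maximal entropy, which exists if and only if the shift is positive recurrent), and does transport entropy through $\Psi$; the identity $h(\widetilde{\mu})=h(\widetilde{\mu}\circ\Psi^{-1})$ for the countable-to-one factor is exactly what this paper itself uses via \cite[Theorem 2.1]{LW1977} in the proof of Proposition \ref{prop:1}. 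Incidentally, for that identity you do not need your ``injective off a set null for positive-entropy measures'' claim: countable-to-one fibers plus the Ledrappier--Walters relativized variational principle already suffice.

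As a standalone proof, however, the proposal has two genuine gaps, and they are precisely the nontrivial content of \cite{Hofbauer1981}. (i) The lifting step: you assert that every ergodic measure of positive entropy on $\Sigma_T^+$ lifts through $\Psi$ with equal entropy, but this is Hofbauer's lifting theorem, not a formal consequence of $\Psi$ being a countable-to-one surjective factor map --- indeed the paper's Remark \ref{rmk:3.2} stresses that $\widetilde{\mu}\mapsto\widetilde{\mu}\circ\Psi^{-1}$ is \emph{not} surjective on measures in general (that failure is exactly why Proposition \ref{prop:1} needs the density of periodic measures). Moreover, a lift a priori lives on $\Sigma^+_{M(\mathcal{D})}$ and, by ergodicity, on some irreducible subdiagram; you must still argue that for a maximizer this subdiagram can be identified with the set $\mathcal{C}_0$ of Theorem \ref{irreducible}, i.e.\ that the diagram of a transitive $T$ has a unique irreducible part of full Gurevich entropy. (ii) The crux, positive recurrence of the diagram at the parameter $e^{-h_{\rm top}(\sigma,\Sigma_T^+)}$, is only gestured at (``follow Hofbauer's combinatorial estimates''). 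The required input is a specific structural estimate --- outside a large finite set of vertices the diagram is essentially a finite union of nearly non-branching paths, so paths avoiding that finite set have exponential growth rate strictly below $h_{\rm top}$, which is where positive entropy and the finiteness of the number of monotonicity branches enter --- and no such estimate is actually carried out in your text. In short: your proposal is an accurate map of how the cited theorem is proved, but it defers, rather than supplies, the two theorems (measure lifting and positive recurrence) on which the result rests.
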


To prove Theorem \ref{thm:main2}, we also use the following lemma:

\begin{lem}\label{lem:key}
Suppose that $h_{\rm top}(\sigma,\Sigma_T^+)>0$.
Let $\mathcal{C}_0$ be as in Theorem \ref{irreducible} and let $\mathcal F_1$, $\mathcal F_2$
be finite  subsets of $\mathcal{C}_0$. 
Then, one can find a subset $\mathcal F $ of $\mathcal{C}_0$ such that $\mathcal F_1\cup \mathcal F_2 \subset \mathcal F$ and  $\Psi ( \Sigma _{M(\mathcal F)}^+)$ satisfies the specification property.
\end{lem}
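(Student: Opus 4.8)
The plan is to reduce the claim to producing a \emph{finite} transitive sub-diagram of $\mathcal C_0$ containing the prescribed vertices, and then to transport the specification property across the factor map $\Psi$. Precisely, it suffices to find a finite set $\mathcal F$ with $\mathcal F_1\cup\mathcal F_2\subset\mathcal F\subset\mathcal C_0$ such that the Markov shift $\Sigma^+_{M(\mathcal F)}$ is transitive: then $\Sigma^+_{M(\mathcal F)}$ is a transitive Markov shift on the \emph{finite} alphabet $\mathcal F$, so Lemma~\ref{lem:key2} gives it the specification property, and it remains only to push this forward under $\Psi$. We may assume $\mathcal F_1\cup\mathcal F_2\neq\emptyset$, since otherwise any finite transitive sub-diagram works, and one exists because $\Sigma^+_{M(\mathcal C_0)}$ is transitive with $h_{\rm top}(\sigma,\Sigma_T^+)>0$.

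For the construction, the essential input is Theorem~\ref{irreducible}: $\Sigma^+_{M(\mathcal C_0)}$ is transitive, equivalently the oriented graph on the vertex set $\mathcal C_0$ is strongly connected, so between any ordered pair of its vertices there is a finite directed path lying in $\mathcal C_0$. I would fix a base vertex $D_0\in\mathcal F_1\cup\mathcal F_2$ and, for each $D\in\mathcal F_1\cup\mathcal F_2$, choose a directed path $\pi_D^+$ from $D_0$ to $D$ and a directed path $\pi_D^-$ from $D$ to $D_0$, all contained in $\mathcal C_0$. Taking $\mathcal F$ to be the finite set of all vertices appearing on one of these finitely many paths gives $\mathcal F_1\cup\mathcal F_2\subset\mathcal F\subset\mathcal C_0$. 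The point to verify is that the subgraph induced on $\mathcal F$ is strongly connected: any $v\in\mathcal F$ lies on some $\pi_D^{\pm}$, so within $\mathcal F$ it can reach $D_0$ (continue along $\pi_D^-$, or run $v\to D$ along $\pi_D^+$ and then $D\to D_0$ along $\pi_D^-$) and, symmetrically, $D_0$ can reach $v$; concatenating at $D_0$ connects any two vertices of $\mathcal F$. Hence $\Sigma^+_{M(\mathcal F)}$ is transitive.

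It then remains to transfer specification from $\Sigma^+_{M(\mathcal F)}$ to $Y:=\Psi(\Sigma^+_{M(\mathcal F)})$, using that $\Psi$ is a $1$-block factor map (sending each symbol $D_i$ to the unique $x_i$ with $D_i\subset[x_i]$) and that $\mathcal F$ is finite, so $Y$ is a subshift on the finite alphabet $\{1,\dots,k\}$. Given $u,v\in\mathcal L(Y)$, I would lift them to words $\tilde u,\tilde v\in\mathcal L(\Sigma^+_{M(\mathcal F)})$ mapping to $u,v$ under the symbol map of $\Psi$, invoke the specification constant $L$ of $\Sigma^+_{M(\mathcal F)}$ to obtain a word $\tilde w$ with $|\tilde w|\le L$ and $\tilde u\tilde w\tilde v\in\mathcal L(\Sigma^+_{M(\mathcal F)})$, and project: the image of $\tilde u\tilde w\tilde v$ equals $u\,w'\,v\in\mathcal L(Y)$ with $w'$ the image of $\tilde w$, of length $|w'|=|\tilde w|\le L$. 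So $Y$ inherits specification with the same constant $L$. The main obstacle is the middle step: arranging that $\mathcal F$ is at once finite, contains $\mathcal F_1\cup\mathcal F_2$, and induces a strongly connected subgraph — simply adjoining connecting paths between prescribed pairs need not render the newly introduced intermediate vertices mutually accessible, which is precisely why every path is routed through the single base vertex $D_0$.
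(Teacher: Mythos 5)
Your proof is correct and follows essentially the same route as the paper: find a finite subset $\mathcal F\supset\mathcal F_1\cup\mathcal F_2$ of $\mathcal C_0$ inducing a transitive Markov shift, apply Lemma~\ref{lem:key2} to get specification on $\Sigma^+_{M(\mathcal F)}$, and push it forward through the factor map $\Psi$. The only difference is that you spell out the two steps the paper leaves implicit --- the hub construction through a base vertex $D_0$ for the ``straightforward'' existence of a finite transitive sub-diagram, and the one-block lifting argument for the ``well-known'' fact (cited there from \cite[(21.4) Proposition (c)]{DGS2006}) that factors inherit specification --- both of which you carry out correctly.
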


\begin{proof}
It is straightforward to see that, by the transitivity of $\Sigma^+ _{M(\mathcal{C}_0)}$,
there is a finite subset $\mathcal F $ of $\mathcal{C}_0$ such that $\mathcal F_1\cup \mathcal F_2 \subset \mathcal F$ and  $\Sigma^+ _{M(\mathcal F)}$ is a transitive Markov shift (on a finite alphabet $\mathcal F$).
Hence it follows from Lemma \ref{lem:key2} that
$\Sigma^+_{M(\mathcal F)}$ satisfies the specification property.
It is a well-known fact that the factor of a system with the specification property has the specification property (cf.~\cite[(21.4) Proposition (c)]{DGS2006}). 
By the property of $\Psi$ noted above, we have that
$\Psi\colon \Sigma^+_{M(\mathcal F)}\to\Psi(\Sigma^+_{M(\mathcal F)})$ is
a factor map, which completes the proof of the lemma.
\end{proof}

\section{The proofs of main theorems}

In this section, we  give the proofs of Theorems \ref{thm:main} and \ref{thm:main2}.
Let $\Sigma_T^+$ be as in Theorem \ref{thm:main2} and suppose that
$\mathcal{M}_{\sigma}^{\mathrm{per}}(\Sigma_T^+)$ is dense in $\mathcal{M}_{\sigma}^{\mathrm{erg}}(\Sigma_T^+)$.
Let $\varphi\in \mathcal C(\Sigma_T^+)$ and assume that $E(\varphi)\not=\emptyset$. 
We will show that $h_{\mathrm{top}}(\sigma,E(\varphi))=h_{\mathrm{top}}(\sigma,\Sigma_T^+)$. 
Since $h_{\rm top}(\sigma,\Sigma_T^+)=0$ immediately implies $h_{\rm top}(\sigma, E(\varphi ) )=0$  (and thus $h_{\mathrm{top}}(\sigma,E(\varphi))=h_{\mathrm{top}}(\sigma,\Sigma_T^+)$),
we may assume that $h_{\rm top}(\sigma,\Sigma_T^+)>0$. 
By  \cite[Lemma 1.6]{Thompson2010},
the assumption that $E(\varphi ) \neq \emptyset$ implies 
\begin{equation}
\label{e1}
\inf _{\mu \in \mathcal M_\sigma^{\rm erg} (\Sigma ^+ _T)} \int \varphi d\mu < \sup _{\mu \in \mathcal M_\sigma^{\rm erg} (\Sigma ^+ _T)} \int \varphi d\mu.
\end{equation}

It was proven in the previous works \cite{BS2000, CKS2005, Thompson2012} that if $\Sigma_T^+$ satisfies (a weaker form of) the specification property,
then $E(\varphi ) \neq \emptyset$ implies $h_{\rm top}(\sigma,E(\varphi))=h_{\rm top}(\sigma,\Sigma_T^+)$.
However, in our setting, 
 it 
 seldom happens  that $\Sigma _T^+$ has the specification
 property and it is unclear 
if there is a specification-like property satisfied by all  $\Sigma_T^+$,
which makes the proof of Theorem \ref{thm:main2} difficult.
To overcome the difficulty, we employed a strategy to find a subset of $\Sigma ^+_T$ which satisfies  the specification property with \emph{almost} full topological entropy:

\begin{prop}\label{prop:1}
For any $\epsilon >0$, there is a compact  $\sigma$-invariant set $\Sigma ^+_\epsilon \subset \Sigma ^+_T$ such that the following holds.
\begin{itemize}
\item[(I)] $\Sigma ^+_\epsilon$ satisfies the specification property.
\item[(II)] $\mathcal M_\sigma (\Sigma ^+_\epsilon )$ is nontrivial with respect to $\varphi$:
\[
\inf _{\mu \in \mathcal M_\sigma^{\rm erg} (\Sigma ^+_\epsilon )} \int \varphi d\mu < \sup _{\mu \in \mathcal M^{\rm erg}_\sigma (\Sigma ^+_\epsilon )} \int \varphi d\mu .
\] 
\item[(III)] $h_{\mathrm{top}} (\sigma, \Sigma ^+_\epsilon) \geq h_{\mathrm{top}} (\sigma,\Sigma_T^+) -\epsilon $.
\end{itemize}
\end{prop}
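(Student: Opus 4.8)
The plan is to realize $\Sigma^+_\epsilon$ as the $\Psi$-image of a \emph{finite} subshift of Hofbauer's Markov diagram, using Lemma \ref{lem:key} to guarantee specification while enlarging the finite alphabet so as to capture, inside one and the same finite subsystem, both almost full entropy and two orbits on which the Birkhoff average of $\varphi$ differs. Throughout I work inside the transitive component $\mathcal{C}_0\subset\mathcal{D}$ and with the factor map $\Psi$ provided by Theorems \ref{irreducible} and \ref{maximal}.

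The first ingredient I would record is that $\Psi$ is uniformly finite-to-one on every finite subsystem. Indeed, for finite $\mathcal{F}\subset\mathcal{C}_0$ and $(x_i)\in\Psi(\Sigma^+_{M(\mathcal{F})})$, any preimage $(D_i)\in\Sigma^+_{M(\mathcal{F})}$ must satisfy $D_{i+1}=[x_{i+1}]\cap\sigma(D_i)$, so the whole vertex sequence is determined by the single vertex $D_1\in\mathcal{F}$; hence $\Psi|_{\Sigma^+_{M(\mathcal{F})}}$ is at most $(\#\mathcal{F})$-to-one. As $\Sigma^+_{M(\mathcal{F})}$ and $\Psi(\Sigma^+_{M(\mathcal{F})})$ are compact, a bounded-to-one factor map preserves topological entropy, so $h_{\mathrm{top}}(\sigma,\Psi(\Sigma^+_{M(\mathcal{F})}))=h_{\mathrm{top}}(\sigma,\Sigma^+_{M(\mathcal{F})})$. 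This identity is what will let me transport entropy estimates from the diagram down to $\Sigma_T^+$.

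For the entropy bound I would use that, by Theorem \ref{maximal} together with the variational principle, the Gurevich entropy of the transitive countable Markov shift $\Sigma^+_{M(\mathcal{C}_0)}$ equals $h_{\mathrm{top}}(\sigma,\Sigma_T^+)$; hence there is a finite transitive $\mathcal{G}\subset\mathcal{C}_0$ with $h_{\mathrm{top}}(\sigma,\Sigma^+_{M(\mathcal{G})})>h_{\mathrm{top}}(\sigma,\Sigma_T^+)-\epsilon$. For nontriviality I would invoke the hypothesis that $\mathcal{M}^{\mathrm{per}}_\sigma(\Sigma_T^+)$ is dense in $\mathcal{M}^{\mathrm{erg}}_\sigma(\Sigma_T^+)$: by weak-continuity of $\mu\mapsto\int\varphi\,d\mu$ and \eqref{e1}, the extremal averages are already attained along periodic measures, giving $p_1,p_2\in\mathcal{M}^{\mathrm{per}}_\sigma(\Sigma_T^+)$ with $\int\varphi\,dp_1\neq\int\varphi\,dp_2$. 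I would then lift the supporting periodic orbits into $\mathcal{C}_0$: since $\Psi(\Sigma^+_{M(\mathcal{C}_0)})=\Sigma_T^+$ and the vertex sequence over a fixed itinerary is forward-deterministic, the nesting $\{\,y:wy\in\Sigma_T^+\}\supset\{\,y:w'y\in\Sigma_T^+\}$ for $w$ a suffix of $w'$ forces the vertex sequence over a periodic itinerary to visit only finitely many vertices $\mathcal{P}_i\subset\mathcal{C}_0$, whose periodic tail is a periodic orbit of $\Sigma^+_{M(\mathcal{C}_0)}$ projecting onto $\operatorname{supp} p_i$.

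Finally I would assemble everything: applying Lemma \ref{lem:key} to $\mathcal{F}_1=\mathcal{G}\cup\mathcal{P}_1$ and $\mathcal{F}_2=\mathcal{P}_2$ yields a finite $\mathcal{F}\subset\mathcal{C}_0$ with $\mathcal{G}\cup\mathcal{P}_1\cup\mathcal{P}_2\subset\mathcal{F}$ such that $\Sigma^+_\epsilon:=\Psi(\Sigma^+_{M(\mathcal{F})})$ has the specification property, which is (I). Since $\mathcal{P}_1,\mathcal{P}_2\subset\mathcal{F}$, the orbits carrying $p_1,p_2$ lie in $\Sigma^+_{M(\mathcal{F})}$, so $p_1,p_2\in\mathcal{M}^{\mathrm{erg}}_\sigma(\Sigma^+_\epsilon)$ and (II) follows from $\int\varphi\,dp_1\neq\int\varphi\,dp_2$; and by monotonicity of entropy and the finite-to-one identity, $h_{\mathrm{top}}(\sigma,\Sigma^+_\epsilon)=h_{\mathrm{top}}(\sigma,\Sigma^+_{M(\mathcal{F})})\ge h_{\mathrm{top}}(\sigma,\Sigma^+_{M(\mathcal{G})})>h_{\mathrm{top}}(\sigma,\Sigma_T^+)-\epsilon$, which is (III). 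The main obstacle I anticipate is precisely the periodic lifting step, namely realizing the extremal-average periodic orbits inside the transitive, entropy-dense component $\mathcal{C}_0$ on finitely many vertices; this is the point that couples the density hypothesis to the finite-subsystem machinery and genuinely relies on the structure of the Markov diagram, with the finite-to-one entropy preservation being the secondary point needing care.
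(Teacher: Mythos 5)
Your overall architecture is the same as the paper's: produce $\Sigma^+_\epsilon$ as $\Psi(\Sigma^+_{M(\mathcal F)})$ for a finite $\mathcal F\subset\mathcal C_0$ supplied by Lemma \ref{lem:key}, and feed it two ergodic measures with distinct $\varphi$-averages plus a high-entropy subsystem. Two of your deviations are correct and would even simplify the paper's argument. First, your direct proof that $\Psi$ is at most $(\#\mathcal F)$-to-one on $\Sigma^+_{M(\mathcal F)}$ (forward determinism of the vertex sequence) and hence entropy-preserving is sound, in place of the appeal to \cite{LW1977}. Second, you decouple (II) from (III): your witnesses for (II) are two periodic measures $p_1,p_2$ with distinct averages, which indeed exist by the density hypothesis together with \eqref{e1} (your phrase that the extremal averages are ``attained'' is unjustified, but also unnecessary), so the high-entropy finite subdiagram $\mathcal G$ needs no weak$^*$ localization at all, and the classical Gurevich approximation of the entropy of a transitive countable Markov shift by finite subsystems suffices. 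The paper instead uses its high-entropy measure $\nu$ itself as one of the two witnesses for (II); this forces $\nu$ to be weak$^*$-close to the maximal measure $m$, which is exactly why it must invoke Takahasi's entropy-approachability theorem \cite{Takahasi2019} rather than Gurevich's theorem.

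However, the periodic lifting step---which you yourself flag as the main obstacle---is a genuine gap as written. Your justification is that forward determinism plus the nesting of follower sets along the prefixes of a periodic itinerary ``forces'' the vertex sequence to visit only finitely many vertices. This is a non sequitur: the nesting only says that the chain $D_1\supset D_{1+n}\supset D_{1+2n}\supset\cdots$ is decreasing, and nothing in your argument prevents it from being \emph{strictly} decreasing forever, in which case the follower-set lift visits infinitely many distinct vertices and has no periodic tail. This is not a hypothetical worry: for general subshifts a periodic point can have pairwise distinct follower sets along its prefixes (e.g.\ in the subshift of $\{0,1\}^{\mathbb N}$ where every maximal block $0^a$, including the initial one, must be followed by a block $1^b$ with $b\ge a$, the fixed point $0^\infty$ has strictly decreasing follower sets $F_m$ of $0^m$, since $1^m0^\infty\in F_m\setminus F_{m+1}$). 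There is also a second mismatch: the follower-set lift starts at $[x_1]\in\mathcal D_0$ and need not lie in the transitive part $\mathcal C_0$, while the lift guaranteed by $\Psi(\Sigma^+_{M(\mathcal C_0)})=\Sigma_T^+$ (Theorem \ref{irreducible}) need not be the follower-set lift, so your nesting observation does not even apply to it. Ruling out the strictly decreasing scenario and landing inside $\mathcal C_0$ genuinely requires the structure of Markov diagrams of piecewise monotonic maps; this is precisely the content of \cite[Theorem 8]{Hofbauer1986}, which the paper cites at exactly this point. If you replace your nesting argument by that citation (applied to a periodic point in the support of each of $p_1$ and $p_2$, where the paper needs it only once), the rest of your proof closes.
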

\begin{proof}
Let $m$ be the unique measure maximizing entropy on $\Sigma_T^+$ in Theorem \ref{maximal}. 
Then \eqref{e1} implies that there is an ergodic invariant probability measure $\mu$ on $\Sigma ^+_T$ such that
$
\int \varphi d\mu \neq \int \varphi dm.
$
Therefore, by the assumption that $ \mathcal M^{\mathrm{per}}_\sigma (\Sigma ^+_T)$ is dense in $ \mathcal M^{\mathrm{erg}}_\sigma (\Sigma ^+_T)$, 
one can find $\mu_{\mathrm{per}} \in \mathcal M^{\mathrm{per}}_\sigma (\Sigma ^+_T)$ such that
$
\int \varphi d\mu_{\mathrm{per}}\neq \int \varphi dm.
$
Let  $\alpha$ be a positive number such that $|\int\varphi d\mu_{\rm per}-\int\varphi d m|>2\alpha$ holds.
Choose an open neighborhood $\mathcal{U}$ of $m$ in $\mathcal{M}_{\sigma}(\Sigma_T^+)$ such that
$|\int\varphi d m - \int\varphi d\mu|\le \alpha$ whenever $\mu\in\mathcal{U}$.
Let $\mathcal{C}_0\subset\mathcal{D}$ be as in Theorem \ref{irreducible} and
$\widetilde{m}$  the unique measure maximizing entropy on $\Sigma^+_{M(\mathcal{C}_0)}$ such that  $m=\widetilde{m}\circ\Psi^{-1}$ in Theorem \ref{maximal}. 
Since $\Psi$ is continuous, we can find an open neighborhood $\mathcal V$ of $\widetilde{m}$
such that for any invariant measure $\widetilde{\mu}\in \mathcal V$, we have
$\widetilde{\mu}\circ\Psi^{-1}\in\mathcal{U}$.

Since $\Sigma^+_{M(\mathcal{C}_0)}$ is transitive, it follows from the entropy-approachability theorem for transitive Markov shifts \cite[Main Theorem]{Takahasi2019} that 
for any $\epsilon>0$, we can find a finite subset $\mathcal{F}_1\subset\mathcal{C}_0$ and
an ergodic measure $\widetilde{\nu}\in \mathcal{V}\cap \mathcal{M}^{\mathrm{erg}}_{\sigma}(\Sigma^+_{M(\mathcal{F}_1)})$ such that $h(\widetilde{\nu})\ge h(\widetilde{m})-\epsilon=h(m)-\epsilon$ holds. We set $\nu=\widetilde{\nu}\circ\Psi^{-1}$. Clearly, $\nu\in\mathcal{U}\cap
\mathcal{M}^{\rm erg}_{\sigma}(\Psi(\Sigma^+_{M(\mathcal{F}_1)}))$.
Since $\Psi\colon\Sigma^+_{M(\mathcal{F}_1)}\to\Psi(\Sigma^+_{M(\mathcal{F}_1)})$
is a countable-to-one factor map, we can see that $h(\widetilde{\nu})=h(\nu)$
(cf.~\cite[Theorem 2.1]{LW1977}).
Hence we have $h(\nu)\ge h_{\rm top}(\sigma,\Sigma_T^+)-\epsilon$.

Let $x\in\Sigma_T^+$ be a periodic point in the support of $\mu_{\mathrm{per}}$.
Then, 
by
$\Psi(\Sigma^+_{M(\mathcal{C}_0)})=\Sigma_T^+$ and \cite[Theorem 8]{Hofbauer1986}, 
there are finitely many vertices $C_1,\ldots,C_n\in\mathcal{C}_0$ such that
$\Psi(C_1\cdots C_nC_1\cdots C_n\cdots)=x$.
We set $\mathcal{F}_2=\{C_1,\ldots,C_n\}$. Since both $\mathcal{F}_1$ and $\mathcal{F}_2$ are finite subsets
of $\mathcal{C}_0$,
it follows from Lemma \ref{lem:key} that  one can find a subset $\mathcal F$ of $\mathcal{C}_0$ such that
$\mathcal{F}_1\cup\mathcal{F}_2\subset\mathcal{F}$ and
$\Psi(\Sigma^+_{M(\mathcal{F})})$ satisfies the specification property.

Set $\Sigma_{\epsilon}^+=\Psi(\Sigma^+_{M(\mathcal{F})})$.
We have already shown (I). Note that $\nu,\mu_{\rm per}\in\mathcal{M}^{\rm erg}_{\sigma}(\Sigma^+_{\epsilon})$.
Since $\nu\in\mathcal{U}$, we have
$|\int\varphi d\nu-\int\varphi d\mu_{\rm per}|\ge |\int\varphi dm-\int\varphi d\mu_{\rm per}|-
|\int\varphi d\nu-\int\varphi dm|\ge 2\alpha-\alpha>0$, which implies (II).
Finally by the variational principle in \eqref{eq:0202},
 \[
 h_{\mathrm{top}} (\sigma,\Sigma _\epsilon ^+) \geq h(\nu) 
   \ge h_{\mathrm{top}} (\sigma,\Sigma_T  ^+) -\epsilon,
 \]
 which implies (III) and completes the proof of Proposition \ref{prop:1}.
\end{proof}

\begin{remark}\label{rmk:3.2}
The first paragraph in the proof of Proposition \ref{prop:1} is the only place where  the density of periodic measures is used.
It is natural to ask if we can remove the condition by directly applying \cite[Main Theorem]{Takahasi2019} to a lift of the contrasting measure $\mu$ (i.e.~$\mu \in \mathcal M^{\mathrm{erg}}_\sigma (\Sigma ^+_T)$ satisfying $\int \varphi d\mu \neq \int \varphi dm$).
However, it is far from obvious that the measure $\mu$ can be lifted as a measure on $\Sigma ^+_{M(\mathcal C_0)}$ by $\Psi $ because the pushforward map $\widetilde\mu \mapsto \widetilde\mu \circ \Psi ^{-1}$ from $\mathcal{M}^{\mathrm{erg}}_{\sigma}(\Sigma^+_{M(\mathcal{C}_0)})$ to $\mathcal{M}^{\mathrm{erg}}_{\sigma}(\Sigma^+_T)$  is not surjective in general (although $\Psi : \Sigma ^+_{M(\mathcal{C}_0)} \to \Sigma^+_T$ is surjective; notice that the countable Markov shift $\Sigma^+_{M(\mathcal{C}_0)}$ is not compact  in general).
This is exactly the reason why we approximated $\mu$ by $\mu _{\mathrm{per}}$   in the proof of Proposition \ref{prop:1}.
\end{remark}

\begin{proof}[Proof of Theorem \ref{thm:main2}]
Fix $\epsilon >0$ and
 let $ \Sigma _\epsilon^+$
  be the compact $\sigma$-invariant subset of $\Sigma^+_T$ given in 
  Proposition \ref{prop:1}.
Then, it follows from \cite[Theorem 3.1]{CKS2005} that (I) and (II) of Proposition \ref{prop:1} imply the full topological entropy on $E(\varphi \vert _{ \Sigma _\epsilon^+})$:
\[
h_{\mathrm{top}}(\sigma,E(\varphi \vert _{ \Sigma _\epsilon^+}) )= h_{\mathrm{top}}(\sigma,\Sigma ^+_\epsilon) .
\]
So,  (III) leads to that
\[
h_{\mathrm{top}} (\sigma,\Sigma_T^+) \geq h_{\mathrm{top}}(\sigma,E (\varphi) ) \geq 
h_{\mathrm{top}}(\sigma,E(\varphi \vert _{ \Sigma _\epsilon^+}) ) \geq h_{\mathrm{top}} (\sigma,\Sigma_T^+) -\epsilon .
\]
Since $\epsilon$ is arbitrary, we get $  h_{\mathrm{top}}(\sigma,E (\varphi) ) = h_{\mathrm{top}} (\sigma,\Sigma_T^+)$, that is, the irregular set $E(\varphi) $ has  full topological entropy.
This completes the proof.
\end{proof}

\begin{proof}[Proof of Theorem \ref{thm:main}]
Let $(I_j)_{j=1}^k$ be
 the monotonicity intervals
 of $T$, and $i_0, \ldots ,i_k$  the endpoints of $I_1, \ldots ,I_k$ (see Subsection \ref{subsection:defofentropy}). 
Let $\varphi $ be a continuous function on $[0,1]$ such that $E(\varphi )\neq \emptyset$. 
Recall that $\mathcal M_T^{\mathrm{per}}([0,1])$ is supposed to be dense in $\mathcal M^{\mathrm{erg}}_T([0,1])$.
Thus, it follows from \cite[Theorem A]{Y} that
$\mathcal{M}_{\sigma}^{\mathrm{per}}(\Sigma_T^+)$ is also dense in $\mathcal{M}_{\sigma}^{\mathrm{erg}}(\Sigma_T^+)$.
Recall that $X_T$ is given in \eqref{eq:1028b}.
 
We first show that $E(\varphi)\cap X_T\not=\emptyset$.
Take a point $x\in E(\varphi)$. 
Arguing by contradiction, we assume that
$T^n(x)\not\in X_T$ for all $n\ge 0$. 
Then, $x\not\in X_T$, and thus  $T^{m} (x) = i_{j}$ with some $m\geq 0$ and $0\leq j\leq k$.
Since $T^{m} (x) \not\in X_T$ by assumption, we can find some $m'\geq 0$ and $0\leq j'\leq k$ such that $T^{m'+m}(x) =i_{j'}$. By repeating this argument (at most) $k$-times, we conclude that each $i_j$ is a periodic point.
It
 contradicts with that $x\in E(\varphi)$ and $T^{m} (x) = i_{j}$.
Hence there is an integer $n\ge 0$ such that $T^n(x)\in X_T$.
Since $E(\varphi )$ is an invariant set, this implies that
$E(\varphi)\cap X_T\neq \emptyset$.

Note that the coding map
$\mathcal{I}\colon X_T\to \Sigma_T^+$ is injective by the transitivity of $T$ (refer to \S6 in \cite{Y}).
Hence one can see that the set
\begin{equation}
\label{rev}
\bigcap_{n\ge 0}{\rm cl}(T^{-n}(I_{x _{n+1}}))
\end{equation}
is a unit set for any $(x_n)_{n\in\mathbb{N}}\in\Sigma_T^+$.
Here ${\rm cl}(A)$   denotes the closure  of the set $A$.
Then we define a map $\Phi\colon\Sigma_T^+\to [0,1]$
by $\Phi((x_n)_{n\in\mathbb{N}})=y$, where $y$ is the unique element of the unit set in (\ref{rev}).
Then it is well known that $\Phi$ is continuous, surjective and $\Phi\circ\sigma(x)=T\circ\Phi(x)$
for any $x\in\mathcal{I}(X_T)$. Moreover, we have $\Phi(\mathcal{I}(X_T))=X_T$ and
the restricted map $\Phi\colon\mathcal{I}(X_T)\to X_T$ is the inverse map of $\mathcal{I}\colon X_T\to \mathcal{I}(X_T)$.
Hence we have 
$
\varphi\circ\Phi\in \mathcal{C}(\Sigma_T^+)$ and 
\[
E(\varphi\circ\Phi)\cap \mathcal{I}(X_T)=\mathcal I(E(\varphi)\cap X_T).
\]
This  implies that
$E(\varphi\circ\Phi)\not=\emptyset$ since $E(\varphi)\cap X_T \neq \emptyset$, and that $h_{\rm top}(T,E(\varphi)\cap X_T)=h_{\rm top}(\sigma,E(\varphi\circ\Phi))$ since $\Sigma_T^+\setminus \mathcal{I}(X_T)$ is countable. 
Hence, it follows from  Theorem \ref{thm:main2} that
\[
h_{\rm top}(\sigma,E(\varphi\circ\Phi)) =h_{\rm top}(\sigma,\Sigma_T^+).
\]
Therefore, 
  by virtue of \eqref{eq:1028}, we have 
\begin{align*}
h_{\rm top}(T,E(\varphi))  &= h_{\rm top}(T,E(\varphi)\cap X_T)\\
&=h_{\rm top}(\sigma,\Sigma_T^+)
=h_{\rm top}(T,X_T)=h_{\rm top}(T,[0,1]), 
\end{align*}
which completes the proof.
\end{proof}

\begin{remark}\label{rmk:1021}
Let $T: [0,1]\to [0,1]$ be a (not necessarily transitive) piecewise monotone map  for which  $\mathcal M_T^{\mathrm{per}} ([0,1])$ is dense in $\mathcal M_T^{\mathrm{erg}} ([0,1])$. 
Assume that there are 
mutually disjoint invariant intervals $J_1, \ldots , J_N$ with $N\in \mathbb N$
 such that $ \bigcup 
 _{j=1}^N J_j =[0,1]$
and the restriction of $T$ on $J_j$ is transitive  for each $1\leq j\leq N$.
Then for any continuous function $\varphi : [0,1]\to \mathbb R$
  for which $E(\varphi )\neq \emptyset$,
it follows from 
Theorem \ref{thm:main} and \eqref{eq:1028}
 together with
  \cite[Lemma 1.6]{Thompson2010} that 
 \begin{multline*}
 h_{\mathrm{top}}(T, E(\varphi) ) =\sup \Big\{ h_{\mathrm{top}}(T, J_j ) \mid
 1\leq j\leq N \; \text{such that} \\ 
  \inf _{\mu \in \mathcal M_T^{\mathrm{erg}}(J_j\cap X_T)} \int \varphi d\mu <\sup _{\mu \in \mathcal M_T^{\mathrm{erg}}(J_j\cap X_T)} \int \varphi d\mu  \Big\} .
 \end{multline*}
As a consequence, 
 one can easily 
  construct \emph{non-transitive} piecewise monotonic maps $T$ and continuous functions  $\varphi$ for which the dichotomy ``$E(\varphi ) = \emptyset$ or $h _{\mathrm{top}}(T,E(\varphi ) ) = h_{\mathrm{top}} (T,[0,1])$'' do not hold (for example, take $J_1$, $J_2$ with   $h_{\mathrm{top}}(T, J_1) < h_{\mathrm{top}}(T,J_2) $ and $\varphi $   supported  on $J_1$ with $\inf _{\mu \in \mathcal M_T^{\mathrm{erg}}(J_1\cap X_T)} \int \varphi d\mu <\sup _{\mu \in \mathcal M_T^{\mathrm{erg}}(J_1\cap X_T)} \int \varphi d\mu  $). 
  \end{remark}

\section*{Acknowledgments}
We would like to express our deep gratitude to Paulo Varandas for many valuable comments. 
 We also would like to thank
Yong Moo Chung for many suggestions.
This work was partially supported by JSPS KAKENHI
Grant Numbers 19K14575, 19K21834 and 18K03359.

\bibliography{NY}

\end{document}